\newcommand{\subjclass}[2][2010]{%
  \let\@oldtitle\@title%
  \gdef\@title{\@oldtitle\footnotetext{#1 \emph{Mathematics subject classification.} #2}}%
}
\newcommand{\keywords}[1]{%
  \let\@@oldtitle\@title%
  \gdef\@title{\@@oldtitle\footnotetext{\emph{Key words and phrases.} #1.}}%
}
\newtheorem{theorem}{Theorem}[section]
\newtheorem{conjecture}[theorem]{Conjecture}
\newtheorem{lemma}[theorem]{Lemma}
\newtheorem{proposition}[theorem]{Proposition}
\newtheorem{corollary}[theorem]{Corollary}
\newtheorem{remark}[theorem]{Remark}
\newcommand{\E}{{\mathbb{E}} \hspace{.2mm}}
\newcommand{\R}{{\mathbb{R}}}
\DeclareMathOperator{\Tr}{Tr}
 \newcommand{\scalprod}[2]{\left\langle #1,#2 \right\rangle}
 \newcommand{\vertiii}[1]{{\left\vert\kern-0.25ex\left\vert\kern-0.25ex\left\vert #1 
    \right\vert\kern-0.25ex\right\vert\kern-0.25ex\right\vert}}
\newcommand{\Id}{\operatorname{Id}}    
\newcommand{\vct}[1]{\bm{#1}}
\newcommand{\mtx}[1]{\bm{#1}}
\newcommand{\A}{\mtx{A}}
\newcommand{\B}{\mtx{B}}
\newcommand{\U}{\mtx{U}}
\newcommand{\X}{\vct{X}}
\newcommand{\wprec}{\underset{w}{\prec}}
\newcommand{\cH}{\mathcal{H}}
\begin{document}

\title{An arithmetic-geometric mean inequality for products of three matrices}

\author{Arie Israel\thanks{A. Israel is with the Mathematics Department at the University of Texas at Austin,   \rm{arie@math.utexas.edu}}, Felix Krahmer\thanks{F. Krahmer is with the Department of Mathematics, Unit M15 Applied Numerical Analysis, Technische Universit{\"a}t M{\"u}nchen, \rm{felix.krahmer@tum.de} },  and Rachel Ward\thanks{R.\ Ward is with the Mathematics Department at the University of Texas at Austin, \rm{rward@math.utexas.edu}}
}

\subjclass{15A42}
\keywords{Arithmetic-geometric mean inequality, linear algebra, norm inequalities}



\maketitle

\abstract{Consider the following noncommutative arithmetic-geometric mean inequality: Given positive-semidefinite matrices $\A_1, \dots, \A_n$, the following holds for each integer $m \leq n$:
\begin{equation}
\nonumber
\frac{1}{n^m}\sum_{\mbox{\footnotesize $j_1, j_2, \dots, j_m = 1$}}^{\mbox{\footnotesize $n$}}   \vertiii{ \A_{j_1} \A_{j_2} \dots \A_{j_m} } \geq  \frac{(n-m)!}{n!} \sum_{\mbox{\footnotesize $j_1, j_2, \dots, j_m = 1$} \atop \mbox{\footnotesize all distinct}}^{\mbox{\footnotesize $n$}}   \vertiii{ \A_{j_1} \A_{j_2} \dots \A_{j_m} },
\end{equation}
where $ \vertiii{ \cdot }$ denotes a unitarily invariant norm, including the operator norm and Schatten $p$-norms as special cases.  While this inequality in full generality remains a conjecture, we prove that the inequality holds for products of up to three matrices, $m \leq 3$.  The proofs for $m = 1,2$ are straightforward; to derive the proof for $m=3,$ we appeal to a variant of the classic Araki-Lieb-Thirring inequality for permutations of matrix products.   

}
\section{Introduction}
The arithmetic-geometric mean (AMGM) inequality says that for any sequence of $n$ non-negative real numbers $x_1, x_2, \dots, x_n,$ the  arithmetic mean is greater than or equal to the geometric mean:
\begin{equation}
\nonumber
\frac{x_1 + x_2 + \dots + x_n}{n} \quad \geq \quad \left( x_1 x_2 \dots  x_n \right)^{1/n}.
\end{equation}
This can be viewed as a special case $(m=n)$ of Maclaurin's inequality:
\begin{proposition}
\label{amgm_vector}
If $x_1, \dots, x_n$ are positive scalars and $m \geq n$ then it holds that
$$
\frac{1}{n^m} \sum_{\mbox{\footnotesize $j_1, j_2, \dots, j_m = 1$}}^{\mbox{\footnotesize $n$}} x_{j_1} x_{j_2} \dots x_{j_m} \quad \geq \quad  \frac{1}{{n\choose m}}  \sum_{\mbox{\footnotesize $\Lambda \subset [n];$} \atop \mbox{\footnotesize $| \Lambda | = m$}} x_{j_1} x_{j_2} \dots x_{j_m}.
$$
\end{proposition}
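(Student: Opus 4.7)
The plan is to recognize Proposition \ref{amgm_vector} as the classical Maclaurin inequality in a slightly disguised form (after correcting the apparent typo $m \geq n$, which should read $m \leq n$; for $m > n$ the right-hand side is an empty sum and the claim is trivial). First, by the multinomial theorem,
\[
\frac{1}{n^m} \sum_{j_1,\ldots,j_m=1}^{n} x_{j_1} \cdots x_{j_m} = \left(\frac{x_1+\cdots+x_n}{n}\right)^m = p_1^m,
\]
where I write $p_k := e_k(x_1,\ldots,x_n)/\binom{n}{k}$ for the normalized $k$-th elementary symmetric mean. The right-hand side of the proposition is precisely $p_m$, so the claim reduces to $p_1^m \geq p_m$, equivalently $p_1 \geq p_m^{1/m}$, which is exactly Maclaurin's inequality.

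Next, I would derive Maclaurin from Newton's inequalities $p_k^2 \geq p_{k-1}\, p_{k+1}$ for $1 \leq k \leq n-1$. These follow from the standard Rolle's theorem argument: the polynomial $P(X) = \prod_{i=1}^n (X-x_i)$ has only real roots, hence so does every derivative $P^{(j)}$, and taking $j = n-k-1$ reduces the claim to the nonnegativity of the discriminant of a specific quadratic built from the coefficients of $P^{(n-k-1)}$.

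Given Newton's inequalities, Maclaurin follows by a short telescoping argument. Setting $r_k := p_k/p_{k-1}$ with the convention $p_0 := 1$, Newton rearranges to $r_{k+1} \leq r_k$, so the ratios are non-increasing, and
\[
p_m = \prod_{k=1}^{m} r_k \;\leq\; r_1^m = p_1^m,
\]
which is the desired bound. The only real obstacle is establishing Newton's inequalities cleanly (with care when some $p_k$ vanish, which occurs only on a measure-zero set and can be handled by a density argument). In practice I would simply cite Maclaurin's inequality as a classical fact (e.g., from Hardy--Littlewood--P\'olya) rather than reprove it from scratch, since the derivation is well known and not central to the paper's main contribution.
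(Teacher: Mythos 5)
Your proposal is correct, and it matches the paper's treatment exactly: the paper does not prove Proposition \ref{amgm_vector} but simply points the reader to Hardy--Littlewood--P\'olya, which is what you also ultimately suggest. Your additional observations are accurate and worth noting: the hypothesis ``$m \geq n$'' is indeed a typo for ``$m \leq n$'' (the right-hand side requires $m$-element subsets of $[n]$, and the surrounding text calls AM--GM the ``special case $m=n$''), and your identification of the multinomial expansion $\frac{1}{n^m}\sum_{j_1,\ldots,j_m} x_{j_1}\cdots x_{j_m} = p_1^m$, the right-hand side as $p_m$, and the reduction to Newton's inequalities with the telescoping ratio argument $p_m = \prod_{k=1}^m r_k \leq r_1^m = p_1^m$ is the standard and correct derivation. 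No gaps.
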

\noindent See \cite{hardy} for more details.  With a slight abuse of notation, we will refer to both of the above inequalities as AMGM inequalities.

\bigskip

Several noncommutative extensions of Proposition \ref{amgm_vector} have been proven for inequalities involving the product of two matrices.  For an overview of these results, we refer the reader to \cite{bhat2}.  
These inequalities are often stated for a general unitarily invariant (UI) norm.  Recall that a norm  $\vertiii{\cdot}$ on $M(d)$,  the space of complex $d \times d$ matrices, is said to be unitarily invariant if for all $\X, \U \in M(d)$ with $\U$ unitary, one has
 $\vertiii{\U \X} = \vertiii{\X \U} =  \vertiii{\X}.$
Examples of UI norms are the Schatten $p$-norms (including the operator norm and the Hilbert-Schmidt norm) and the Ky Fan $k$-norms.  More generally, every UI norm is a symmetric gauge function of the singular values \cite{bhatbook}.  
The first AMGM inequality for products of two matrices appeared in \cite{bhat0}:  If $\A$ and $\B$ are compact operators on a separable Hilbert space, then 
$$
2 \vertiii{ \A^{*}\B } \leq \vertiii{ \A \A^{*} + \B \B^{*} }.
$$
The paper \cite{bhat1} extended this result, showing that for arbitrary $d \times d$ matrices $\A,\B,\X$, and for every unitarily invariant norm, 
$$
2 \vertiii{\A^{*} \X \B } \leq \vertiii{ \A \A^{*} \X + \X \B \B^{*} }.
$$
Most closely related to the results here, Kosaki \cite{kosaki98} showed that for positive-semidefinite matrices $\A$ and $\B$, and for $1/p + 1/q = 1$,
$$
\vertiii{ \A \X \B } \leq \frac{1}{p} \vertiii{ \A^p \X } +  \frac{1}{q} \vertiii{ \X \B^q }.
$$
In the special case $\X = \Id$ and $p = q = 2$, applying this inequality and averaging with respect to the order of $\A$ and $\B$, reproduces our result for the case of two matrices.

\bigskip

\noindent More recently, certain noncommutative extensions of the AMGM inequality have been posed for products of (complex-valued) positive-semidefinite matrices.  The following conjecture was posed by Recht and R{\`e} \cite{recht12}:

\begin{conjecture}
\label{conj:recht}
Let $\A_1, \dots, \A_n$ be positive-semidefinite matrices.  Then the following inequality holds for each $m \leq n$:
\begin{equation}
\nonumber
\left\| \frac{1}{n^m} \sum_{\mbox{\footnotesize $j_1, j_2, \dots, j_m = 1$}}^{\mbox{\footnotesize $n$}} \A_{j_1} \A_{j_2} \dots \A_{j_m} \right\|  \geq  \left\| \frac{(n-m)!}{n!} \sum_{\mbox{\footnotesize $j_1, j_2, \dots, j_m = 1;$} \atop \mbox{\footnotesize \emph{all distinct}}}^{\mbox{\footnotesize $n$}}  \A_{j_1} \A_{j_2} \dots \A_{j_m} \right\|.
\end{equation}
Here, $\| \cdot \|$ denotes the standard operator norm.
\end{conjecture}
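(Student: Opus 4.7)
The plan is case-by-case on $m$, using increasingly heavy noncommutative tools.

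Case $m=1$ is trivial because every single-index sum is automatically a distinct-index sum, so the two sides are literally equal.

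For $m=2$, set $S=\sum_i\A_i$ and $D=\sum_i\A_i^2$, both positive semidefinite. The identities $\sum_{j_1,j_2}\A_{j_1}\A_{j_2}=S^2$ and $\sum_{j_1\neq j_2}\A_{j_1}\A_{j_2}=S^2-D$ reduce the conjecture to $\|S^2-D\|\leq\tfrac{n-1}{n}\|S^2\|$. For any unit vector $v$ one has $v^{*}(S^2-D)v=\|Sv\|^2-v^{*}Dv$, and I would estimate both signed parts. For the upper bound, Cauchy--Schwarz gives $v^{*}Dv=\sum_i\|\A_iv\|^2\geq\tfrac{1}{n}\|\sum_i\A_iv\|^2=\tfrac{1}{n}\|Sv\|^2$, so $v^{*}(S^2-D)v\leq\tfrac{n-1}{n}\|Sv\|^2\leq\tfrac{n-1}{n}\|S^2\|$. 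For the lower bound, use $\|\A_iv\|^2\leq\|\A_i\|\cdot v^{*}\A_iv$ (because $\A_i$ is PSD) together with $\max_i\|\A_i\|\leq\|S\|$ to obtain $v^{*}Dv\leq\|S\|\cdot\|Sv\|$; this reduces the desired bound to the scalar inequality $y^2-\|S\|\,y+\tfrac{n-1}{n}\|S\|^2\geq 0$ in $y=\|Sv\|$, whose discriminant is negative whenever $n\geq 2$, so it holds for all real $y$.

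For $m=3$, the analogous noncommutative Newton identity reads $T_3:=\sum_{\text{distinct}}\A_{j_1}\A_{j_2}\A_{j_3}=S^3-SD-DS-\sum_i\A_iS\A_i+2\sum_i\A_i^3$, and the target is $\|T_3\|\leq\tfrac{(n-1)(n-2)}{n^2}\|S^3\|$. Here the pairwise AM--GM approach genuinely breaks down: the naive cubic analogue $\vertiii{ABC}\leq\tfrac13(\vertiii{A^3}+\vertiii{B^3}+\vertiii{C^3})$ is known to fail. My plan is to invoke the promised variant of Araki--Lieb--Thirring, which controls unitarily invariant norms of triple products $\A_i\A_j\A_k$ after rearrangement of exponents. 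Since for Hermitian $\A_i$ one has $(\A_i\A_j\A_k)^{*}=\A_k\A_j\A_i$, the six orderings of a distinct triple pair into three Hermitian-conjugate pairs with equal UI norm; further averaging across the cyclic $\Z/3$ action and applying the Araki--Lieb--Thirring variant should dominate the resulting symmetric triple product by combinations of $\A_i^a\A_j^b\A_k^c$ with $a+b+c=3$, matching exactly the repeated-index terms encoded in the decomposition $S^3=T_3+R_3$.

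The main obstacle is the $m=3$ step. In the $m=2$ argument, Hermitian conjugation by itself absorbs the full permutation symmetry of the product and collapses the problem onto a single scalar quadratic inequality; for triple products, however, the cyclic subgroup of $S_3$ acts nontrivially on unitarily invariant norms, and neutralizing this cyclic asymmetry via a sufficiently sharp Araki--Lieb--Thirring variant, without losing the combinatorial factor $(n-1)(n-2)/n^2$, is where the delicate work lies.
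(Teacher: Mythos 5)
You should first register that the paper never proves Conjecture~\ref{conj:recht}: it is stated as an open problem, with only the trivial $m=1$ case and the easy $n=m=2$ case noted. What the paper actually proves (Theorem~\ref{main}) is the $m=3$ case of the Ducci variant, Conjecture~\ref{ducci_conjecture}, where the unitarily invariant norm sits \emph{inside} the sum. These are materially different statements: placing the norm inside the sum is precisely what permits term-by-term use of the Araki--Lieb--Thirring-type estimate, since one only needs to compare norms of individual triple products. Within your proposal, the $m=1$ case is fine, and the $m=2$ argument for general $n$ is correct and is more than the paper records: the reduction to $\|S^2-D\|\le\tfrac{n-1}{n}\|S^2\|$ with $S=\sum_i\A_i$, $D=\sum_i\A_i^2$, the Cauchy--Schwarz bound $v^*Dv\ge\tfrac1n\|Sv\|^2$ for the upper sign, and the chain $\A_i^2\preceq\|\A_i\|\A_i\preceq\|S\|\A_i$ giving $D\preceq\|S\|S$ followed by the scalar quadratic $y^2-\|S\|y+\tfrac{n-1}{n}\|S\|^2\ge 0$ for the lower sign, all check out (and the discriminant is indeed negative for $n\ge 2$).

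The $m=3$ plan, however, does not close, and this is exactly where the conjecture is genuinely open. Your Newton-type identity $T_3=S^3-SD-DS-\sum_i\A_iS\A_i+2\sum_i\A_i^3$ is correct, but the strategy you sketch---pair the six orderings of a distinct triple into adjoint pairs, average over the cyclic action, and dominate each ordering by products $\A_i^a\A_j^b\A_k^c$ via an ALT variant---controls only norms of \emph{individual} triple products. For Conjecture~\ref{conj:recht} the operator norm is applied to the whole sum $T_3$, and the desired inequality $\|T_3\|\le\tfrac{(n-1)(n-2)}{n^2}\|S^3\|$ is not a consequence of bounding $\|\A_i\A_j\A_k\|$ summand by summand: the triangle inequality only yields an upper bound on $\|T_3\|$ by $\sum\|\A_i\A_j\A_k\|$ and gives you nothing comparable on the right, since $\|S^3\|$ is not a sum of term norms. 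In effect your $m=3$ sketch slides from the Recht--R\`e conjecture to the Ducci variant, which is the one the paper's ALT machinery actually serves. One side remark: the inequality $\vertiii{\A\B\C}\le\tfrac13(\vertiii{\A^3}+\vertiii{\B^3}+\vertiii{\C^3})$ for positive-semidefinite $\A,\B,\C$, which you assert ``is known to fail,'' is in fact true and is used by the paper as inequality~\eqref{eq:threemat}, a direct consequence of the H\"older-type Lemma~\ref{thm:holder}; the obstruction is not that this term-wise bound fails but that it is the wrong shape for an inequality with the norm outside the sum.
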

\noindent Note that the case $m=1$ is trivially true, as both sides of the inequality are equal.  The conjecture is easily seen to be true for $n=2, m=2$, and in this case something stronger can be said: the symmetrized geometric mean precedes the square of the arithmetic mean in the positive definite order: for any $\A, \B \succeq 0$, 
$$
(\frac{1}{2} \A + \frac{1}{2} \B )^2 - (\frac{1}{2} \A \B + \frac{1}{2} \B \A) \succeq 0.
$$
Recht and R{\`e} also verify that the conjecture holds for general $m$ and $n$ \emph{in expectation} for several classes of random matrices \cite{recht12}.  

\bigskip

\noindent Later, Ducci \cite{ducci12} posed a variant of the noncommutative AMGM conjecture where the matrix operator norm appears inside the summation on either side:
\begin{conjecture}
\label{ducci_conjecture}
Suppose that $\A_1, \dots, \A_n$ are positive-semidefinite matrices.  
The following inequality holds for each $m \leq n$:
\begin{equation}
\label{with_norm}
\frac{1}{n^m} \sum_{\mbox{\footnotesize $j_1, j_2, \dots, j_m = 1$}}^{\mbox{\footnotesize $n$}} \| \A_{j_1} \A_{j_2} \dots \A_{j_m} \|  \geq  \frac{(n-m)!}{n!} \sum_{\mbox{\footnotesize $j_1, j_2, \dots, j_m = 1;$} \atop \mbox{\footnotesize \emph{all distinct}}}^{\mbox{\footnotesize $n$}} \| \A_{j_1} \A_{j_2} \dots \A_{j_m} \|.
\end{equation}
\end{conjecture}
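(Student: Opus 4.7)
The plan is to argue case-by-case in $m$: for each of $m=1,2,3$ I expand both sides of \eqref{with_norm} according to the multiplicity pattern of the index tuple $(j_1,\dots,j_m)$ and reduce, after routine algebra with the counting coefficients, to a single pointwise norm inequality comparing distinct-index products with products in which some index repeats. The case $m=1$ is a tautology since both sides equal $\tfrac{1}{n}\sum_j\|\A_j\|$.

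For $m=2$, separating diagonal ($i=j$) and off-diagonal ($i\neq j$) contributions and clearing denominators reduces the inequality to
\[
(n-1)\sum_{j=1}^n \|\A_j^2\| \;\geq\; \sum_{i \neq j} \|\A_i \A_j\|.
\]
I would then invoke the two-matrix AMGM bound of \cite{bhat0}, namely $2\vertiii{\A\B} \leq \vertiii{\A^2+\B^2}$ for positive-semidefinite $\A,\B$ and every unitarily invariant norm, combined with the triangle inequality to give $\vertiii{\A\B} \leq \tfrac12(\vertiii{\A^2}+\vertiii{\B^2})$; summing this pointwise bound over the $n(n-1)$ ordered pairs $i\neq j$ closes the case.

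For $m=3$ the same bookkeeping, grouping triples $(i,j,k)$ by whether the index multiset has the form $\{i,i,i\}$, $\{i,i,j\}$ (in each of the three orderings), or three distinct entries, reduces the inequality to
\[
(n-1)(n-2)\sum_i \|\A_i^3\| + (n-1)(n-2) \!\!\sum_{i \neq j}\!\! \bigl( \|\A_i^2 \A_j\| + \|\A_i \A_j \A_i\| + \|\A_j \A_i^2\|\bigr) \;\geq\; (3n-2) \!\!\!\!\sum_{i,j,k\text{ distinct}}\!\!\!\! \|\A_i \A_j \A_k\|.
\]
The key step is then to isolate and establish a pointwise \emph{permutation} inequality for positive-semidefinite $\A,\B,\C$ and every UI norm, which dominates the six orderings of $\vertiii{\A\B\C}$ by a symmetric combination of two-matrix expressions of the form $\vertiii{\A^2\B}$, $\vertiii{\A\B\A}$, $\vertiii{\B\A^2}$ and their analogues under permutations of $\{\A,\B,\C\}$, with constants compatible with the prefactors $(n-1)(n-2)$ and $3n-2$. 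I would try to derive such a bound from an Araki--Lieb--Thirring-type log-majorization relating $\vertiii{\A^{a_1}\B^{a_2}\C^{a_3}}$ for different orderings of the exponents, passing through a Schatten-$2$ endpoint (where Cauchy--Schwarz suffices) and invoking the fact that every UI norm is a symmetric gauge of the singular values to extend to arbitrary UI norms.

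The main obstacle is precisely this three-matrix permutation step. The classical Araki--Lieb--Thirring inequality yields very clean bounds for two PSD matrices and for sandwich products like $\B\A\B$, but there is no symmetry acting transitively on the six orderings of three PSD matrices by cyclicity alone, so non-cyclic reorderings such as $\A\B\C$ versus $\A\C\B$ require a genuinely new ingredient. Once the pointwise permutation bound is in hand, matching it against the combinatorial constants $(n-1)(n-2)$ and $3n-2$ is a routine summation. The method does not seem to extend to $m \geq 4$: the number of inequivalent orderings of products of PSD matrices grows, and controlling them simultaneously would demand a fundamentally different averaging identity.
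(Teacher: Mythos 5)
Your reduction to the combinatorial inequality
\[
(n-1)(n-2)\sum_i \vertiii{\A_i^3} + (n-1)(n-2)\sum_{i\neq j}\bigl(\vertiii{\A_i^2\A_j} + \vertiii{\A_i\A_j\A_i} + \vertiii{\A_j\A_i^2}\bigr) \geq (3n-2)\!\!\sum_{i,j,k\ \text{distinct}}\!\!\vertiii{\A_i\A_j\A_k}
\]
matches the paper's, and the $m=1,2$ cases are fine, but the proposal stops exactly at the step you flag as ``the main obstacle'': you never produce the three-matrix pointwise inequality you would need. As written this is an outline of a plan with the crucial lemma left unproved, so there is a genuine gap.

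The paper's route avoids searching for a permutation inequality that treats all six orderings of $\vertiii{\A\B\C}$ symmetrically. Instead it works in two asymmetric stages. First, the ``linear'' not-all-distinct terms are lower-bounded by the sandwich form using an Araki--Lieb--Thirring corollary valid for every UI norm (proved via antisymmetric tensorization and Fan dominance): $\vertiii{\B^2\A}\geq\vertiii{\B\A\B}$, hence $\vertiii{\A_i^2\A_j}\geq\vertiii{\A_i\A_j\A_i}$ and $\vertiii{\A_i\A_j^2}\geq\vertiii{\A_j\A_i\A_j}$. After this step the whole left-hand side is expressed in sandwich terms $\vertiii{\A_i\A_j\A_i}$ plus the cubes $\vertiii{\A_j^3}$. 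Second, the sandwich terms dominate the all-distinct products through a H\"older-type estimate: since $\A_j\succeq 0$, writing $\A_i\A_j\A_k = (\A_i\A_j^{1/2})(\A_j^{1/2}\A_k)$ and applying Cauchy--Schwarz in the UI norm gives $\vertiii{\A_i\A_j\A_i}+\vertiii{\A_k\A_j\A_k}\geq\vertiii{\A_i\A_j\A_k}+\vertiii{\A_k\A_j\A_i}$, and summing over distinct $(i,j,k)$ closes most of the accounting; the residual discrepancy in constants is absorbed by the cube terms via a three-factor H\"older/scalar-AMGM bound $\vertiii{\X_1\X_2\X_3}\leq\tfrac13\bigl(\vertiii{|\X_1|^3}+\vertiii{|\X_2|^3}+\vertiii{|\X_3|^3}\bigr)$. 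The structural insight you are missing is that the sandwich form $\vertiii{\A\B\A}$ factors through $\B^{1/2}$ and is therefore far more tractable than a general ordering $\vertiii{\A\B\C}$; the ALT step is what converts the problem into sandwich form, and that is precisely the ingredient your outline does not supply.
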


  
\noindent The case $m=1$ of Conjecture \ref{ducci_conjecture} is trivially true, as both sides of the inequality are the same.  For $m=2$, as explained above, the conjecture directly follows, for arbitrary UI norms, from a result of Kosaki \cite{kosaki98} (we provide a proof without assuming this result in Section~\ref{k2}). Given the positive results for the cases $m=1$ and $m=2$, it is tempting to believe that a simple inductive proof could be used to prove the general case.  However, the difficulty in this approach, and with the noncommutative AMGM inequalities in general, is that the product of two positive-semidefinite matrices is not necessarily positive-semidefinite.   The main result of this paper, proved in Section \ref{k3}, is a proof of this conjecture for the case $m=3$.  Again, we prove a more general result which holds for any unitarily-invariant norm:

\begin{theorem}[AMGM inequality for three matrices]
\label{main}
Suppose that $\A_1, \A_2, \dots, \A_n \in M(d)$ are positive-semidefinite.  Let $\vertiii{\cdot}$ be a unitarily invariant norm on $M(d)$.  Then the following AMGM inequality holds:
\begin{equation}
\label{mainineq}
\frac{1}{n^3} \sum_{\mbox{\footnotesize $i,j,k = 1$}}^{\mbox{\footnotesize $n$}} \vertiii{ \A_{i} \A_{j} \A_{k} } \geq  \frac{(n-3)!}{n!} \sum_{\mbox{\footnotesize $i, j,  k =1$}  \atop \mbox{\footnotesize \emph{ all distinct}} }^{\mbox{\footnotesize $n$}} \vertiii{ \A_{i} \A_{j} \A_{k} }.
\end{equation}
\end{theorem}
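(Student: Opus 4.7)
The plan is to reduce (\ref{mainineq}) to a comparison between averages of norms over the ``non-distinct'' and ``distinct'' ordered triples, and then to establish that comparison via a pointwise rearrangement inequality of Araki--Lieb--Thirring type.

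For the reduction, I would split the $n^3$ triples $(i,j,k)$ on the left of (\ref{mainineq}) according to the pattern of equalities: (i) $i=j=k$, (ii) exactly two indices coincide, (iii) all three distinct. Since each $A_\ell$ is self-adjoint and the norm is unitarily invariant, $\vertiii{A_i A_j A_k} = \vertiii{(A_i A_j A_k)^*} = \vertiii{A_k A_j A_i}$; this identifies terms in pairs in classes (ii) and (iii). Writing $S_{\mathrm{d}}$ and $S_{\mathrm{nd}}$ for the totals over class (iii) and over classes (i)$\cup$(ii) respectively, a short algebraic manipulation shows that (\ref{mainineq}) is equivalent to
$$
(n-1)(n-2)\,S_{\mathrm{nd}} \;\geq\; (3n-2)\,S_{\mathrm{d}},
$$
i.e.\ the average of $\vertiii{A_i A_j A_k}$ over the $n(3n-2)$ triples with some repetition is at least the average over the $n(n-1)(n-2)$ triples with all indices distinct.

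I would then attempt to produce this bound through a local inequality. For each unordered triple of distinct indices $\{i,j,k\}$, the three associated pairs each lie in $n-2$ distinct triples while each triple contains $3$ pairs, which pins down the constants. Writing $A,B,C$ for $A_i,A_j,A_k$, the target local inequality should have the shape
$$
\alpha_1 \sum_{X\neq Y \in \{A,B,C\}} \vertiii{X^2 Y} \;+\; \alpha_2 \sum_{X\neq Y \in \{A,B,C\}} \vertiii{XYX} \;\geq\; \alpha_3 \sum_{\pi \in S_3} \vertiii{\pi(A)\,\pi(B)\,\pi(C)},
$$
with explicit positive constants $\alpha_1,\alpha_2,\alpha_3$ dictated by the combinatorics above.

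The crux of the argument, and the main obstacle, is precisely this local rearrangement inequality, which is the promised ``variant of the Araki--Lieb--Thirring inequality for permutations of matrix products.'' Morally, it asserts that clustering like factors (giving $A^2B$ or $ABA$) yields larger UI-norm than spreading them (giving $ABC$), in the same spirit as the classical ALT bound $\vertiii{(AB)^r}\geq \vertiii{A^r B^r}$; the essential difficulty, flagged in the introduction, is that products of two positive-semidefinite matrices need not themselves be positive-semidefinite, so one cannot induct or appeal naively to operator monotonicity. My approach would be to pass to singular values via the symmetric-gauge-function description of UI norms and prove the underlying weak (log-)majorization on singular-value sequences, possibly aided by a spectral or integral representation that interpolates between the fully-distinct product $ABC$ and the clustered ones $A^2B$, $ABA$; an alternative route is a trace-inequality argument combined with complex interpolation. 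If the strict pointwise inequality proves too strong, a presymmetrization over the $S_3$-action on $\{A,B,C\}$ is harmless, since the reduced inequality is itself $S_3$-symmetric, and may be the most efficient form in which to establish it.
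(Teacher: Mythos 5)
Your reduction to the inequality $(n-1)(n-2)\,S_{\mathrm{nd}} \geq (3n-2)\,S_{\mathrm{d}}$ is exactly the paper's equation \eqref{equiv_to}, and your intuition that an Araki--Lieb--Thirring-type rearrangement bound should drive the argument is also correct. But there are two genuine gaps in the proposal.

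First, your ``local inequality'' is missing a needed ingredient: it has no $\vertiii{X^3}$ terms, yet the diagonal part $\sum_j\vertiii{\A_j^3}$ of $S_{\mathrm{nd}}$ is essential and cannot be dropped. Concretely, using $\vertiii{\A_i\A_j\A_k}=\vertiii{\A_k\A_j\A_i}$, the two-equal part of $S_{\mathrm{nd}}$ is $\sum_{i\neq j}\bigl(2\vertiii{\A_i^2\A_j}+\vertiii{\A_i\A_j\A_i}\bigr)$. Applying the ALT corollary $\vertiii{\A_i^2\A_j}\geq\vertiii{\A_i\A_j\A_i}$ and then the two-sided H\"older bound $\vertiii{\X_1\X_2\X_1}+\vertiii{\X_3\X_2\X_3}\geq 2\vertiii{\X_1\X_2\X_3}$ (valid for $\X_2\succeq 0$) yields only $(n-1)(n-2)\,S_{\mathrm{2eq}}\geq 3(n-1)\,S_{\mathrm{d}}$, which falls short of $(3n-2)\,S_{\mathrm{d}}$ by exactly $S_{\mathrm{d}}$. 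That missing $S_{\mathrm{d}}$ is supplied by the diagonal terms via the three-way H\"older bound $\vertiii{\X_1\X_2\X_3}\leq\tfrac13\bigl(\vertiii{|\X_1|^3}+\vertiii{|\X_2|^3}+\vertiii{|\X_3|^3}\bigr)$, which averaged over distinct triples gives $(n-1)(n-2)\sum_j\vertiii{\A_j^3}\geq S_{\mathrm{d}}$. Any local inequality you try to prove must therefore include the $\vertiii{X^3}$ terms on its left side, or the combinatorics will not close.

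Second, and more fundamentally, you leave the key rearrangement inequality unproved and only gesture at possible strategies (singular-value log-majorization, trace plus complex interpolation, presymmetrization). The paper does not need a single monolithic ``permutation ALT'' lemma; it assembles the bound from three separately proved elementary pieces: (i) the genuine ALT input, packaged as $\vertiii{\B^2\A}\geq\vertiii{\B\A\B}$ (Corollary~\ref{alt4}, itself obtained from Lemma~\ref{alt2} by antisymmetric-tensor lifting to all UI norms); (ii) the inequality $\vertiii{\X_1\X_2\X_3}\leq\tfrac12\vertiii{\X_1\X_2\X_1}+\tfrac12\vertiii{\X_3\X_2\X_3}$, which is just the matrix H\"older inequality applied to $\A=\X_1\X_2^{1/2}$, $\B=\X_2^{1/2}\X_3$; and (iii) the bound $\vertiii{\X_1\X_2\X_3}\leq\tfrac13\sum_i\vertiii{|\X_i|^3}$, another consequence of H\"older. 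Your proposal is missing pieces (ii) and (iii) entirely, and without them the bookkeeping you set up cannot be completed, regardless of how the ALT part is established.
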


\begin{remark}
Unitarily invariant norms $\vertiii{\cdot}$ can also be defined in the infinite-dimensional setting of operators on Hilbert spaces.  The subset of compact operators on a Hilbert space ${\cal H}$ having finite norm $\vertiii{ \cdot }$ defines a self-adjoint ideal ${\cal J}$, called the norm ideal associated to $\vertiii{ \cdot }$.  As finite-rank operators are dense in this space, our Theorem \ref{alt-ineq}, and hence Theorem \ref{main}, extends to this more general setting using a limiting argument as presented e.g. in Proposition 2.2 of \cite{CMS05}.

\end{remark}

\noindent To the best of our knowledge, this result represents the first AMGM inequality for products of three matrices.  Our proof uses a variant of the Araki-Lieb-Thirring inequality (Theorem \ref{alt-ineq}).   This inequality is specific to the product of three operators, and it remains an interesting open question whether the result extends to the case $m \geq 4$.

\section{Motivation}  
One can rephrase Conjecture \ref{ducci_conjecture} in terms of comparing the expectations of random matrices formed by sampling with replacement vs. without replacement. Consider a random variable of the form
$X = \| \A_{j_1} \A_{j_2} \dots \A_{j_m} \|,$ 
along with two different probability distributions over the indices $j_{\ell}$: \emph{with replacement} sampling where each index $j_{\ell}$ is drawn uniformly with replacement from the index set $[n]$, and \emph{without replacement} sampling where the indices $j_{\ell}$ are drawn sequentially, uniformly and \emph{without} replacement from $[n]$.  Then $\E_{wr} X,$ the expected value of $X$ corresponding to with-replacement sampling, is equal to the LHS expression of \ref{with_norm}, while $\E_{wor} X,$ the expected value of $X$ corresponding to without-replacement sampling, is equal to the RHS.   From this perspective, the noncommutative AMGM conjectures have interesting implications for stochastic optimization problems.  
For example, Conjecture \ref{ducci_conjecture} would imply that the expected convergence rate of without-replacement sampling is faster than that of with-replacement sampling for randomized iterative solvers such as the \emph{Kaczmarz} method \cite{kaczmarz}.  We repeat the following example from \cite{ducci12, recht12} for completeness.  The Kaczmarz method is a simple and fast method for solving overdetermined consistent least squares problems: solve for $x_{*}$ satisfying $\Phi x_{*} = y,$ where $\Phi \in \mathbb{C}^{n \times d}$ with $n \gg d$.  Let $\varphi_i^{*}$ denote the $i$th row of $\Phi$.  Then, starting from some initial $x_0$, the Kaczmarz algorithm iterates the following recursion until convergence:
$$
x_{k} = x_{k-1} + \frac{y_k - \scalprod{\varphi_{i_k}}{ x_{k-1}}}{ \| \varphi_{i_k} \|_2^2} \varphi_{i_k}.
$$
Set $\A_i = \Id - \varphi_i^{*} \varphi_i / \| \varphi_i \|_2^2$.  Then since $\scalprod{\varphi_i}{x_{*}} = y_i$,
 we may express the residual after $k$ steps of the Kaczmarz algorithm in terms of the matrix product
$$
x_k - x_{*} = \prod_{j=1}^k \A_{i_j} (x_0 - x_{*}).
$$
The residual error can then bounded by
$$
\| \A_{i_1} \A_{i_2} \dots \A_{i_k} (x_0 - x_{*}) \|_2 \leq \| \A_{i_1} \A_{i_2} \dots \A_{i_k} \| \| x_0 - x_{*} \|_2.
$$
Nonasymptotic convergence rates for the Kaczmarz algorithm have been derived \cite{SV09:Randomized-Kaczmarz, needell2013stochastic} in case each row update $i_k$ is selected according to a random update rule, in particular, independently and identically distributed over $[n]$. At the same time, numerical evidence  \cite{recht12} suggest that the convergence rate can be improved by sampling rows independently \emph{without} replacement.  Conjecture \ref{ducci_conjecture} would provide theoretical justification to these observations, showing that with-replacement sampling cannot outperform without-replacement sampling, in expectation with respect to the draw of the row indices.  Since the randomized Kaczmarz algorithm can be viewed as a special case of stochastic gradient descent \cite{needell2013stochastic}, similar remarks about with-replacement vs. without replacement sampling could likely be made for stochastic gradient descent algorithms more generally.

{\section{AMGM inequality for products of two matrices}\label{k2}}
As a warm up to the main result, we first prove the analog of Theorem \ref{main} for products of two positive-semidefinite matrices. Throughout the section, we fix a unitarily invariant norm $\vertiii{\cdot}$ on $M(d)$. In the proof, we will use several basic results about unitarily invariant norms. First, a variant of H\"older's inequality:
\begin{lemma}[Exercise IV.2.7 of \cite{bhatbook}] \label{thm:holder}
Suppose that $p,q,r >0 $ satisfy $\tfrac{1}{p}+\tfrac{1}{q}=\tfrac{1}{r}$. Then for all $\A, \B \in M(d)$ it holds that
 \[
  \vertiii{ |\A\B|^r}^{1/r} \leq \vertiii{ |\A|^{p} }^{1/p}  \vertiii{ |\B|^{q} }^{1/q}.
 \]
\end{lemma}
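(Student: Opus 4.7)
The plan is to reduce the claimed inequality to a scalar majorization statement on singular values. Every unitarily invariant norm on $M(d)$ can be written as $\vertiii{X} = \Phi(s(X))$ for a unique symmetric gauge function $\Phi$, where $s(X) = (s_1(X) \geq \cdots \geq s_d(X))$ lists the singular values of $X$ in decreasing order. Since $|X|^t$ has singular values $s_i(X)^t$, each of the three quantities in the lemma is just $\Phi$ evaluated at a nonnegative vector built from singular values.

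The first step is to invoke Horn's log-majorization inequality for singular values of products, $\prod_{i=1}^k s_i(\A\B) \leq \prod_{i=1}^k s_i(\A)\, s_i(\B)$ for all $k$. Raising both sides to the $r$-th power preserves this log-majorization, and for nonnegative sequences log-majorization implies weak majorization; combined with the fact that symmetric gauge functions are monotone under weak majorization, this yields
\[
\vertiii{|\A\B|^r} \;=\; \Phi\bigl(s(\A\B)^r\bigr) \;\leq\; \Phi\bigl((s_i(\A)^r s_i(\B)^r)_{i=1}^d\bigr).
\]

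The second step is a Hölder-type inequality for symmetric gauge functions. Since $\tfrac{r}{p} + \tfrac{r}{q} = 1$, I would apply it with conjugate exponents $p/r$ and $q/r$ to the nonnegative vectors $u_i = s_i(\A)^r$ and $v_i = s_i(\B)^r$, obtaining
\[
\Phi\bigl((u_i v_i)_i\bigr) \;\leq\; \Phi\bigl((u_i^{p/r})_i\bigr)^{r/p}\,\Phi\bigl((v_i^{q/r})_i\bigr)^{r/q} \;=\; \vertiii{|\A|^p}^{r/p}\,\vertiii{|\B|^q}^{r/q}.
\]
Chaining with Step 1 and taking $r$-th roots finishes the proof.

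The main obstacle is establishing the gauge Hölder inequality used in Step 2, since $\Phi$ is an arbitrary symmetric gauge function rather than an ordinary $\ell^p$ norm. A clean derivation is to first normalize $\Phi((u_i^{p/r})_i) = \Phi((v_i^{q/r})_i) = 1$ by homogeneity, then apply the entrywise Young inequality $u_i v_i \leq \tfrac{r}{p} u_i^{p/r} + \tfrac{r}{q} v_i^{q/r}$, and finally invoke the triangle inequality and monotonicity of $\Phi$ to conclude $\Phi((u_i v_i)_i) \leq \tfrac{r}{p} + \tfrac{r}{q} = 1$. Both this step and Horn's log-majorization are classical and are proved in Bhatia's monograph \cite{bhatbook}, the reference already cited for the lemma.
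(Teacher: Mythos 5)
Your proposal is correct, and it is the standard argument behind the cited Exercise IV.2.7 of Bhatia, which the paper itself does not prove but simply invokes. The chain you describe --- Horn's log-majorization for singular values of a product, the passage from log-majorization to weak majorization of nonnegative decreasing vectors (this is exactly Exercise II.3.5 of \cite{bhatbook}, used elsewhere in the paper as Lemma~\ref{weak_major}), Ky Fan dominance to pass to the symmetric gauge function $\Phi$, and then the gauge H\"older step via entrywise Young's inequality after homogeneous normalization --- is exactly the route the textbook exercise intends, and every step is sound. Two small points you glossed over, both harmless: the conjugate exponents $p/r$ and $q/r$ are indeed both $\geq 1$ because $1/p + 1/q = 1/r$ forces $p,q \geq r$; and the normalization step breaks down if $\A$ or $\B$ is zero, but then both sides vanish trivially. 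Since the products $s_i(\A)\,s_i(\B)$ of two decreasing nonnegative sequences are themselves decreasing, the majorization and gauge-function manipulations apply without any reordering issues. In short, this is a complete and correct proof of the lemma that the paper leaves to the reference.
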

Here, we have used the standard notation $|\A|:= (\A^*\A)^{1/2}$. It follows from Lemma \ref{thm:holder} that for all  $\A,\B \in M(d)$ one has
\begin{equation} \label{eq:twomat}
  \vertiii{\A\B} =  \vertiii{ |\A \B| } \leq \vertiii{ |\A|^2}^{1/2} \cdot \vertiii{ |\B|^2}^{1/2} \leq \tfrac{1}{2} \vertiii{ |\A|^2 } +\tfrac{1}{2} \vertiii{ |\B|^2},
\end{equation}
where in the first equality we used the polar decomposition and the unitary invariance of $\vertiii{\cdot}$,  and in the last inequality we used the scalar AMGM inequality. 
Similarly, by a repeated application of Lemma~\ref{thm:holder}, for all $\X_1,\X_2,\X_3 \in M(d)$ we have
\begin{align}
  \vertiii{ \X_1 \X_2 \X_3 }  & \leq  \vertiii{ |\X_1|^3}^{1/3} \cdot \vertiii{ |\X_2 \X_3|^{3/2} }^{2/3} \leq \vertiii{ |\X_1|^3}^{1/3} \cdot \vertiii{ |\X_2|^3}^{1/3} \cdot \vertiii{ |\X_3|^3}^{1/3} \nonumber \\ 
  & \leq \tfrac{1}{3} \vertiii{ |\X_1|^3} +\tfrac{1}{3} \vertiii{ |\X_2|^3}+\tfrac{1}{3} \vertiii{ |\X_3|^3}. \label{eq:threemat}
\end{align}
Alternatively, if $\X_2 \succeq 0$ then we obtain
\begin{equation}\label{eq:threematb}
 \vertiii{\X_1 \X_2 \X_3}  \leq \tfrac{1}{2} \vertiii{\X_1 \X_2 \X_1}+\tfrac{1}{2}\vertiii{\X_3 \X_2 \X_3}
\end{equation}
by applying \eqref{eq:twomat} to $\A=\X_1 \X_2^{1/2}$ and $\B=\X_2^{1/2} \X_3$.

\bigskip

\begin{proposition}
\label{nis2}
Let $n \geq 2$ and $d \geq 1$, and suppose that $\A_1, \A_2, \dots, \A_n \in M(d)$ are positive-semidefinite.
Then the following arithmetic-geometric mean inequality holds:
\begin{equation*}
\frac{1}{n^2}  \sum_{\mbox{\footnotesize $j,k  =1$}}^{\mbox{\footnotesize $n$}} \vertiii{ \A_{j} \A_{k} }  \geq  \frac{1}{n(n-1)} \sum_{\mbox{ \footnotesize $j, k = 1$}  \atop \mbox{ \footnotesize $j \neq k$}}^{\mbox{\footnotesize $n$}}  \vertiii{ \A_{j} \A_{k} }.
\end{equation*}
\end{proposition}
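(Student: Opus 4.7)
The plan is to reduce the inequality, by elementary rearrangement, to a pairwise bound and then apply the two-matrix AMGM inequality \eqref{eq:twomat} term by term. Observe that the left-hand sum contains the $n$ ``diagonal'' terms $\vertiii{\A_j^2}$ together with the $n(n-1)$ ``off-diagonal'' terms $\vertiii{\A_j \A_k}$ with $j\neq k$. Clearing denominators and cancelling the common off-diagonal contribution, the claim becomes equivalent to
\[
\sum_{\substack{j,k=1 \\ j\neq k}}^n \vertiii{\A_j \A_k} \;\leq\; (n-1) \sum_{j=1}^n \vertiii{\A_j^2}.
\]

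Next, I would apply \eqref{eq:twomat} to each individual off-diagonal summand. Since each $\A_j$ is positive-semidefinite, $|\A_j|^2 = \A_j^* \A_j = \A_j^2$, so \eqref{eq:twomat} yields the pairwise bound
\[
\vertiii{\A_j \A_k} \;\leq\; \tfrac{1}{2} \vertiii{\A_j^2} + \tfrac{1}{2} \vertiii{\A_k^2}
\]
for every pair $(j,k)$. Summing this over all pairs with $j\neq k$ and noting that each term $\vertiii{\A_j^2}$ is picked up exactly $n-1$ times from the first half-sum and $n-1$ times from the second half-sum, one obtains the desired bound after dividing by $2$.

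There is no real obstacle in this argument; this is why Proposition \ref{nis2} serves as a warm up. The two-matrix case reduces, essentially for free, to the pointwise estimate \eqref{eq:twomat}, whereas the three-matrix analogue (Theorem \ref{main}) cannot be handled by a naive termwise bound of the type $\vertiii{\A_i\A_j\A_k}\leq\tfrac13\vertiii{\A_i^3}+\tfrac13\vertiii{\A_j^3}+\tfrac13\vertiii{\A_k^3}$, since summing such estimates does not produce the right combinatorial constants on the right-hand side of \eqref{mainineq}. That is the reason a finer Araki--Lieb--Thirring type inequality enters in Section \ref{k3}.
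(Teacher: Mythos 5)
Your proof is correct and takes essentially the same approach as the paper: reduce by cancellation to $\sum_{j\neq k}\vertiii{\A_j\A_k}\leq(n-1)\sum_j\vertiii{\A_j^2}$ and then apply the two-matrix bound \eqref{eq:twomat} termwise. The paper merely packages the same estimate by grouping each unordered pair and writing $\vertiii{\A_j^2}+\vertiii{\A_k^2}\geq\vertiii{\A_j\A_k}+\vertiii{\A_k\A_j}$, which is just your pointwise bound applied twice and summed.
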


\begin{proof}
Rearranging and canceling like terms, the inequality reduces to
$$
(n - 1)  \sum_{\mbox{\footnotesize $j  =1$}}^{\mbox{\footnotesize $n$}}   \vertiii{ \A_j^2 } \geq \sum_{\mbox{ \footnotesize $j, k = 1$}  \atop \mbox{ \footnotesize $j \neq k$}}^{\mbox{\footnotesize $n$}}  \vertiii{ \A_{j} \A_{k} },
$$
which is equivalent to 
$$
 \sum_{\mbox{\footnotesize $j  =1$}}^{\mbox{\footnotesize $n$}}   \sum_{\mbox{\footnotesize $k = j+1$}}^{\mbox{\footnotesize $n$}}  \left( \vertiii{ \A_j^2 } + \vertiii{ \A_k^2 } - \vertiii{ \A_{j} \A_{k} } - \vertiii{ \A_k \A_j } \right) \geq 0.
$$
To show that each of the summands is nonnegative, we apply \eqref{eq:twomat} both for $\vertiii{\A_i \A_j }$ and for $\vertiii{\A_j \A_i }$,  obtaining

\begin{align*}
\vertiii{ \A_i^2 } + \vertiii{ \A_j^2 }   \geq  \vertiii{ \A_i \A_j } + \vertiii{ \A_j \A_i }.
\end{align*}
This proves the proposition.

\end{proof}

{\section{AMGM inequality for products of three matrices}\label{k3}}

Before proving the main result, Theorem \ref{main}, we will need to establish an ALT-type inequality for unitarily invariant norms. The classic version of this inequality states:
\begin{theorem}[Araki-Lieb-Thirring \cite{araki1990inequality}]\label{alt1}
For all $r \geq 1$ and $q>0$ one has
\[
\Tr \left[ (\B^r \A^r \B^r)^q \right] \geq \Tr \left[ ( \B \A \B)^{rq} \right].
\]
\end{theorem}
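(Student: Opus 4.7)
The plan is to reduce the trace inequality to a log-majorization of eigenvalues, reduce that log-majorization to a single operator-norm inequality via antisymmetric tensor powers, and finally derive the operator-norm inequality from a Cordes-type bound grounded in L\"owner-Heinz operator monotonicity.

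As a setup, note that both $\B\A\B$ and $\B^r\A^r\B^r$ are positive semidefinite (since $\A,\B\succeq 0$), with decreasingly-ordered eigenvalues $\mu_1\geq\mu_2\geq\cdots$ and $\nu_1\geq\nu_2\geq\cdots$, respectively. The inequality to be proved is then $\sum_i \nu_i^q \geq \sum_i \mu_i^{rq}$. By the Ky Fan dominance principle (any symmetric gauge function is monotone under log-majorization, and $t\mapsto t^q$ composed with exponentiation is convex increasing), this will follow once we establish the log-majorization $\{\mu_i^r\}\prec_{\log}\{\nu_i\}$, i.e.\
\[
\prod_{i=1}^k \mu_i^r \;\leq\; \prod_{i=1}^k \nu_i \quad \text{for every } k\geq 1.
\]

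Next I would invoke the antisymmetric tensor power trick: $\prod_{i=1}^k \lambda_i(X) = \|\Lambda^k X\|_{\mathrm{op}}$ for any operator $X$, and the exterior power functor $\Lambda^k$ is multiplicative, preserves positivity, and commutes with fractional powers of positive operators. Applying $\Lambda^k$ to $\A$ and $\B$ therefore reduces the whole family of eigenvalue-product inequalities to the single operator-norm statement
\[
\bigl\|(\B\A\B)^r\bigr\|_{\mathrm{op}} \;\leq\; \bigl\|\B^r\A^r\B^r\bigr\|_{\mathrm{op}}, \qquad r\geq 1,\;\A,\B\succeq 0.
\]
Since $\B\A\B\succeq 0$, the left side equals $\|\B\A\B\|_{\mathrm{op}}^r$, and substituting $s=1/r\in(0,1]$ and replacing $\A,\B$ by $\A^s,\B^s$ recasts the target as the Cordes-type inequality
\[
\|\B^s\A^s\B^s\|_{\mathrm{op}} \;\leq\; \|\B\A\B\|_{\mathrm{op}}^s, \qquad s\in(0,1].
\]

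The remaining task is to prove this last operator-norm inequality, and this is where the main obstacle lies. I would derive it from L\"owner-Heinz operator monotonicity of $t\mapsto t^s$ on $[0,\infty)$ for $s\in[0,1]$, together with the classical Cordes inequality $\|X^s Y^s\|_{\mathrm{op}}\leq \|XY\|_{\mathrm{op}}^s$ for $X,Y\succeq 0$. Each ingredient is standard, but passing cleanly between $\|\B^s\A^s\B^s\|_{\mathrm{op}}$ and $\|\B\A\B\|_{\mathrm{op}}^s$ requires care---one typically uses the $C^*$-identity $\|Z^*Z\|_{\mathrm{op}}=\|Z\|_{\mathrm{op}}^2$ to rewrite $\|\B^s\A^s\B^s\|_{\mathrm{op}}=\|\A^{s/2}\B^s\|_{\mathrm{op}}^2$ and then applies Cordes twice. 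The reductions preceding this step are essentially formal, so the entire argument hinges on a clean derivation of Cordes' inequality from L\"owner-Heinz (equivalently from the Heinz-Kato inequality).
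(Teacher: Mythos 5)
The paper does not prove Theorem \ref{alt1}: the Araki--Lieb--Thirring trace inequality is stated with a citation to \cite{araki1990inequality} and used as a black box (it feeds Lemma \ref{alt2} and, ultimately, Theorem \ref{alt-ineq}). So there is no in-paper argument to compare yours against, and your sketch must be judged on its own.

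Your outline is correct, and the route --- reduce the trace inequality to a log-majorization of eigenvalues, reduce that to a single operator-norm inequality via antisymmetric tensor powers, and settle the operator-norm inequality by a Cordes/L\"owner--Heinz argument --- is one of the standard proofs of ALT. Two remarks. First, the antisymmetric tensorization and majorization machinery you invoke (multiplicativity of $\wedge^k$, compatibility with fractional powers, $\prod_{i\le k}\lambda_i(X)=\|\wedge^k X\|$, Weyl's lemma) is exactly the machinery the paper sets up in its Appendix to pass from Lemma \ref{alt2} to Theorem \ref{alt-ineq}; you use it to deduce the trace inequality from the operator-norm inequality, whereas the paper obtains Lemma \ref{alt2} from the cited trace inequality via a Schatten-$t\to\infty$ limit. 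A fully worked version of your argument would thus make the citation of Theorem \ref{alt1} unnecessary. Note also that the reduction from products to sums works for all $q>0$, not just $q\ge 1$: raise the product inequalities $\prod_{i\le k}\mu_i^r\le\prod_{i\le k}\nu_i$ to the power $q$ \emph{before} applying Lemma \ref{weak_major}, and then sum. Second, the one piece of genuine content you defer is the Cordes inequality $\|\A^s\B^s\|\le\|\A\B\|^s$ for $\A,\B\succeq 0$ and $s\in[0,1]$; you correctly identify L\"owner--Heinz as the engine but leave the derivation implicit. It is short and should be spelled out: after normalizing so that $\|\A\B\|\le 1$, one has $\B\A^2\B\le\Id$, hence (for invertible $\B$, with a density argument in general) $\A^2\le\B^{-2}$; L\"owner--Heinz gives $\A^{2s}\le\B^{-2s}$, so $\B^s\A^{2s}\B^s\le\Id$, i.e.\ $\|\A^s\B^s\|\le 1$. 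Also, in your last step you invoke Cordes only once, not twice --- the other rewritings are applications of the $C^*$-identity $\|Z^*Z\|=\|Z\|^2$. With the Cordes step filled in, the proposal is a complete and correct proof of the cited theorem.
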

A similar statement holds for the matrix operator norm:
\begin{lemma}\label{alt2}
For all $r \geq 1$ and $s > 0$ one has
\[
\left\| (\B^r \A^r \B^r)^s \right\| \leq \left\| ( \B \A \B)^{rs} \right\|.
\]
\end{lemma}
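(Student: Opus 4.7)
The plan is to derive Lemma \ref{alt2} directly from Theorem \ref{alt1} by passing to the limit of the Schatten $q$-norms as $q \to \infty$. The two standard ingredients are (i) for any PSD matrix $\M$ and any $\alpha > 0$, $\|\M^\alpha\| = \|\M\|^\alpha$, since the operator norm of a PSD matrix equals its largest eigenvalue; and (ii) for PSD $\M$ with ordered eigenvalues $\lambda_1 \geq \cdots \geq \lambda_d \geq 0$, the Schatten $q$-norm $\|\M\|_q := (\Tr[\M^q])^{1/q} = (\sum_i \lambda_i^q)^{1/q}$ satisfies $\|\M\|_q \to \lambda_1 = \|\M\|$ as $q \to \infty$.

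Using (i), I would first reduce the lemma to a single-power comparison between $\|\B^r\A^r\B^r\|$ and $\|\B\A\B\|^r$. Since $\A,\B \succeq 0$, both $\B^r\A^r\B^r$ and $\B\A\B$ are positive semidefinite, so $\|(\B^r\A^r\B^r)^s\| = \|\B^r\A^r\B^r\|^s$ and $\|(\B\A\B)^{rs}\| = \|\B\A\B\|^{rs}$. Taking $s$-th roots removes the exponent $s$, and the lemma reduces to the same-direction comparison between $\|\B^r\A^r\B^r\|$ and $\|\B\A\B\|^r$.

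Next, I would obtain this single-power comparison from Theorem \ref{alt1} by taking $q$-th roots and sending $q \to \infty$. Using the identity $(\Tr[\M^{rq}])^{1/q} = \|\M\|_{rq}^{r}$ with $\M = \B\A\B$, Theorem \ref{alt1} rewrites as the Schatten-norm statement
\[
\|\B^r\A^r\B^r\|_q \;\geq\; \|\B\A\B\|_{rq}^{\,r}\qquad\text{for every }q > 0.
\]
Sending $q \to \infty$ (and noting $rq \to \infty$ as well, since $r \geq 1$), ingredient (ii) applied to both sides converts this into the operator-norm statement $\|\B^r\A^r\B^r\| \geq \|\B\A\B\|^r$, which combined with the reduction above yields the conclusion of Lemma \ref{alt2} in the direction consistent with Theorem \ref{alt1}.

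The main obstacle is purely bookkeeping on the direction of the inequality: one has to track it consistently through the power identity $\|\M^\alpha\| = \|\M\|^\alpha$, the $q$-th root of Theorem \ref{alt1}, and the limit $q \to \infty$, each of which preserves ordering on nonnegative quantities. No matrix inequality beyond Theorem \ref{alt1} is required; equivalently, the same operator-norm comparison follows immediately from Araki's log-majorization $\lambda((\B\A\B)^r) \prec_{\log} \lambda(\B^r\A^r\B^r)$, which at $k = 1$ reduces to a comparison of the largest eigenvalues, i.e., of operator norms.
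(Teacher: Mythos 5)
Your proof is correct and takes essentially the same route as the paper: both arguments pass from Theorem \ref{alt1} to the operator norm by taking roots and letting the trace exponent tend to infinity, via $\lim_{t\to\infty}\Tr[\X^t]^{1/t}=\|\X\|$ for $\X\succeq 0$ (your use of $\|\M^\alpha\|=\|\M\|^\alpha$ to peel off the exponent $s$ first is only a cosmetic difference from the paper's substitution $q=st$). You are also right to be careful about the direction: the inequality this limiting argument actually yields --- for you and for the paper alike --- is $\|(\B^r\A^r\B^r)^s\|\geq\|(\B\A\B)^{rs}\|$, which is the reverse of the sign printed in Lemma \ref{alt2} (and in Theorem \ref{alt-ineq}) but is the true direction, the one consistent with Araki's log-majorization and the one actually used in the statement of Corollary \ref{alt4} and in the proof of Theorem \ref{main}; the printed $\leq$ is a sign error in the statement, not a flaw in your argument.
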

\begin{proof}
We apply the ALT inequality with $q = s t$, for $t > 0$, to obtain
\[
\Tr \left[ (\B^r \A^r \B^r)^{st} \right]^{1/t} \geq  \Tr \left[ ( \B \A \B)^{rst} \right]^{1/t}.
\]
Take the limit as $t \rightarrow + \infty$. The desired inequality follows as $\displaystyle \lim_{t \rightarrow +\infty} \Tr [ \X^t ]^{1/t} = \| \X \|$ for any  $\X \succeq 0$.
\end{proof}

In fact, we can generalize Lemma \ref{alt2} to any unitarily invariant norm using a standard trick in matrix analysis known as the ``antisymmetric tensor power'' trick.    We defer the proof to the appendix. 
\begin{theorem}\label{alt-ineq}
For any unitarily invariant norm $\vertiii{\cdot}$ on $M(d)$, and for any $r \geq 1$ and $s > 0$, the following holds for 
any two positive-semidefinite matrices $\A$ and $\B$ in $M(d)$:
\[
\vertiii{(\B^r \A^r \B^r)^s} \leq \vertiii{(\B \A \B)^{rs}}.
\]
\end{theorem}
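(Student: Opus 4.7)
The plan is to lift Lemma \ref{alt2} from the operator norm to all unitarily invariant norms by establishing a log-majorization between the singular values of $(\B^r \A^r \B^r)^s$ and those of $(\B \A \B)^{rs}$, and then invoking Ky Fan dominance. Recall that for a UI norm on $M(d)$ it suffices, by Ky Fan's dominance theorem (Theorem IV.2.2 in \cite{bhatbook}), to verify the weak majorization
\[
\sum_{i=1}^k \sigma_i\bigl((\B^r \A^r \B^r)^s\bigr) \leq \sum_{i=1}^k \sigma_i\bigl((\B \A \B)^{rs}\bigr) \quad \text{for every } 1 \leq k \leq d.
\]
Moreover, weak log-majorization implies weak majorization (this is a standard consequence of the fact that $t \mapsto e^t$ is convex and increasing; see Theorem II.3.3 of \cite{bhatbook}), so it is enough to control the product of the top $k$ singular values.

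The key tool is the $k$-th antisymmetric tensor power $\wedge^k \X$, which acts on $\wedge^k \C^d \cong \C^{\binom{d}{k}}$. The operator $\wedge^k$ is a $*$-homomorphism on positive-semidefinite matrices in the sense that $\wedge^k(\X \Y) = (\wedge^k \X)(\wedge^k \Y)$ and $\wedge^k(\X^\alpha) = (\wedge^k \X)^\alpha$ for $\X \succeq 0$ and $\alpha > 0$; moreover positive-semidefiniteness is preserved, and the crucial identity
\[
\bigl\| \wedge^k \X \bigr\| = \sigma_1(\X) \sigma_2(\X) \cdots \sigma_k(\X)
\]
holds (see Chapter I of \cite{bhatbook}). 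Applying Lemma \ref{alt2} with $\A, \B$ replaced by the positive-semidefinite matrices $\wedge^k \A$ and $\wedge^k \B$, and using the homomorphism properties just listed, gives
\[
\bigl\| \wedge^k (\B^r \A^r \B^r)^s \bigr\| = \bigl\| ((\wedge^k \B)^r (\wedge^k \A)^r (\wedge^k \B)^r)^s \bigr\| \leq \bigl\| ((\wedge^k \B)(\wedge^k \A)(\wedge^k \B))^{rs} \bigr\| = \bigl\| \wedge^k (\B \A \B)^{rs} \bigr\|.
\]
Translating both sides via the identity above yields
\[
\prod_{i=1}^k \sigma_i\bigl((\B^r \A^r \B^r)^s\bigr) \leq \prod_{i=1}^k \sigma_i\bigl((\B \A \B)^{rs}\bigr),
\]
which is exactly the log-majorization.

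Combining these steps: log-majorization gives weak majorization, which by Ky Fan dominance gives the UI norm inequality, proving Theorem \ref{alt-ineq}. The only real work is bookkeeping with $\wedge^k$; the main obstacle (or at least the point most deserving of care) is verifying that $\wedge^k$ commutes with non-integer positive powers and respects products of positive-semidefinite matrices, so that the operator-norm inequality of Lemma \ref{alt2} can be applied verbatim to $\wedge^k \A$ and $\wedge^k \B$. This is a standard fact that can be extracted from the simultaneous spectral decomposition of $\wedge^k$ on a basis of eigenvectors, so no genuine difficulty arises.
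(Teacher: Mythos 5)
Your proposal is correct and follows essentially the same route as the paper's appendix proof: lift Lemma~\ref{alt2} via the antisymmetric tensor power trick to obtain a log-majorization of eigenvalues (equivalently, singular values, since all matrices in sight are positive-semidefinite), upgrade log-majorization to weak majorization, and conclude with the Fan dominance theorem. The only cosmetic difference is the particular reference within Bhatia cited for the log-majorization-implies-weak-majorization step.
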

In particular, we will use the following corollary:
\begin{corollary}\label{alt4}
For any two positive-semidefinite matrices $\A,\B \in M(d)$, and for any unitarily invariant norm,
\[
\vertiii{\B^2\A} \geq \vertiii{\B\A\B}.
\]
\end{corollary}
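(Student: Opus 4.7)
The plan is to deduce Corollary \ref{alt4} as an immediate consequence of Theorem \ref{alt-ineq}, after rewriting the left-hand side via the polar decomposition. First I would use the facts that every unitarily invariant norm satisfies $\vertiii{\X} = \vertiii{\X^*}$ and $\vertiii{\X} = \vertiii{|\X|}$ for $|\X| = (\X^* \X)^{1/2}$. Since $\A, \B$ are Hermitian, $(\A\B^2)^* = \B^2 \A$, and so
\[
\vertiii{\B^2 \A} \;=\; \vertiii{\A \B^2} \;=\; \vertiii{|\A \B^2|} \;=\; \vertiii{\bigl((\A \B^2)^*(\A \B^2)\bigr)^{1/2}} \;=\; \vertiii{(\B^2 \A^2 \B^2)^{1/2}}.
\]

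Next, I would apply Theorem \ref{alt-ineq} with the specific choice $r = 2$ and $s = 1/2$, chosen so that the outer exponent $rs$ on the right-hand side is exactly $1$. This yields a direct comparison between $\vertiii{(\B^2 \A^2 \B^2)^{1/2}}$ and $\vertiii{\B \A \B}$; combined with the identity in the previous display, this produces exactly the asserted inequality $\vertiii{\B^2 \A} \geq \vertiii{\B \A \B}$.

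Since Theorem \ref{alt-ineq} provides the essential content, no real obstacle is expected for the corollary itself. The only point requiring attention is selecting $r$ and $s$ so that Theorem \ref{alt-ineq} specializes precisely to the pair of quantities appearing in the corollary, and correctly identifying $\vertiii{\B^2 \A}$ with $\vertiii{(\B^2 \A^2 \B^2)^{1/2}}$ through the polar decomposition. Both steps are routine applications of standard properties of unitarily invariant norms recorded earlier in the paper.
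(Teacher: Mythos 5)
Your proof is essentially identical to the paper's: both invoke Theorem~\ref{alt-ineq} with $r=2$, $s=1/2$ and identify $\vertiii{\B^2\A}$ with $\vertiii{(\B^2\A^2\B^2)^{1/2}}$ via polar decomposition and unitary invariance. (You use $|\A\B^2|=((\A\B^2)^*(\A\B^2))^{1/2}$; the paper uses $(\,(\B^2\A)(\B^2\A)^*\,)^{1/2}$ --- the same positive-semidefinite matrix $(\B^2\A^2\B^2)^{1/2}$.)

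One point deserves attention, though it is a flaw in the paper rather than in your reasoning. Theorem~\ref{alt-ineq} as printed reads $\vertiii{(\B^r\A^r\B^r)^s}\leq\vertiii{(\B\A\B)^{rs}}$, which with $r=2$, $s=1/2$ would give $\vertiii{\B^2\A}\leq\vertiii{\B\A\B}$, the reverse of the corollary. This is a sign typo propagated from Lemma~\ref{alt2}: starting from the Araki--Lieb--Thirring inequality $\Tr[(\B^r\A^r\B^r)^q]\geq\Tr[(\B\A\B)^{rq}]$ with $q=st$, taking $1/t$-th powers, and letting $t\to\infty$ yields $\|(\B^r\A^r\B^r)^s\|\geq\|(\B\A\B)^{rs}\|$, and the tensorization and weak-majorization steps preserve the direction $\geq$. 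So Lemma~\ref{alt2} and Theorem~\ref{alt-ineq} should read with $\geq$, after which your derivation closes correctly. You implicitly assumed the correct direction (your conclusion is the stated $\geq$); it would be worth stating explicitly that the ALT-type bound goes the way you need, since that is exactly where the paper's own printed signs go astray.
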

\begin{proof}
Apply Theorem \ref{alt-ineq} with $r=2$ and $s = 1/2$ to obtain
\[
\vertiii{ \sqrt{\B^2 \A^2 \B^2}} \leq \vertiii{ \B \A \B} \qquad \mbox{for any} \; \A,\B \in M(d), \; \A, \B \succeq 0.
\]
Note that $\sqrt{\B^2 \A^2 \B^2} = \sqrt{ (\B^2 \A) (\B^2 \A)^*} = | \B^2 \A |$. By the unitary invariance of the norm $\vertiii{\cdot}$ we thus obtain that
\[
\vertiii{ \B^2 \A} = \vertiii{| \B^2 \A |} \leq \vertiii{ \B \A \B},
\]
as desired. 
\end{proof}

\bigskip

\noindent With Corollary \ref{alt4} in hand, we are now ready to prove the main result, Theorem \ref{main}.

\begin{proof}[Proof of Theorem \ref{main}]
Rearranging and canceling like terms, the desired inequality \eqref{mainineq} is equivalent to 
\begin{equation}
\label{equiv_to}
(n -2)(n-1) \sum_{\mbox{\footnotesize $i, j,  k =1$}  \atop \mbox{\footnotesize not all distinct} }^{\mbox{\footnotesize $n$}} \vertiii{ \A_{i} \A_{j} \A_{k} }  \geq  (3n - 2) \sum_{\mbox{\footnotesize $i, j,  k =1$}  \atop \mbox{\footnotesize all distinct} }^{\mbox{\footnotesize $n$}} \vertiii{  \A_{i} \A_{j} \A_{k} }.
\end{equation}
By appeal to Corollary \ref{alt4}, we obtain the bounds
$$
\vertiii{ \A_i^2 \A_j } \geq \vertiii{ \A_i \A_j \A_i }.
$$
By the unitary invariance of $\vertiii{\cdot}$ we know that $\vertiii{\X} = \vertiii{ \X^*}$ for all $\X \in M(d)$. Thus, by the same reasoning as above, we have $\vertiii{ \A_i \A_j^2} = \vertiii{\A_j^2 \A_i} \geq \vertiii{ \A_j \A_i \A_j }$.  Applying these lower bounds to the LHS of \eqref{equiv_to} gives:
\begin{align}
& \text{LHS of } \eqref{equiv_to} \nonumber \\
&= (n-2)(n-1) \left[  \sum_{\mbox{\footnotesize $j  =1$}}^{\mbox{\footnotesize $n$}} \vertiii{ \A_j^3}  + \sum_{\mbox{\footnotesize $i, j =1$}  \atop \mbox{\footnotesize $i \neq j$} }^{\mbox{\footnotesize $n$}}  \left( \vertiii{ \A_{i}^2 \A_{j}} +  \vertiii{ \A_{i} \A_{j}^2} \right) + \sum_{\mbox{\footnotesize $i, j =1$}  \atop \mbox{\footnotesize $i \neq j$} }^{\mbox{\footnotesize $n$}} \vertiii{ \A_{i} \A_{j} \A_{i}} \right] \nonumber \\
&\geq (n-2)(n-1) \sum_{\mbox{\footnotesize $j  =1$}}^{\mbox{\footnotesize $n$}} \vertiii{ \A_j^3 }+ 3(n-2)(n-1)\sum_{\mbox{\footnotesize $i, j =1$}  \atop \mbox{\footnotesize $i \neq j$} }^{\mbox{\footnotesize $n$}}   \vertiii{ \A_{i} \A_{j} \A_{i} }. \label{maineq}
\end{align}
We apply \eqref{eq:threematb} to bound the expressions $\vertiii{ \A_{i} \A_{j} \A_{k}}$ and $\vertiii{ \A_{k} \A_{j} \A_{i} }$, thus obtaining
\[
\vertiii{ \A_{i} \A_{j} \A_{i} } + \vertiii{ \A_{k} \A_{j} \A_{k} } \geq \vertiii{ \A_{i} \A_{j} \A_{k}} + \vertiii{ \A_{k} \A_{j} \A_{i} }.
\]
Summing this estimate over pairwise distinct $i$,$j$,$k$ gives
$$
2 (n-2) \sum_{\mbox{\footnotesize $i, j =1$}  \atop \mbox{\footnotesize $i \neq j$} }^{\mbox{\footnotesize $n$}}  \vertiii{ \A_{i} \A_{j} \A_{i} } \geq 2\sum_{\mbox{\footnotesize $i, j,  k =1$}  \atop \mbox{\footnotesize all distinct} }^{\mbox{\footnotesize $n$}} \vertiii{ \A_{i} \A_{j} \A_{k}}.
$$
Continuing the bound from \eqref{maineq}, we have
\begin{align}
\label{new_bound}
\text{LHS of } \eqref{equiv_to} \geq (n-2)(n-1) \sum_{\mbox{\footnotesize $j  =1$}}^{\mbox{\footnotesize $n$}}  \vertiii{ \A_j^3 } + 3(n-1)  \sum_{\mbox{\footnotesize $i, j,  k =1$}  \atop \mbox{\footnotesize all distinct} }^{\mbox{\footnotesize $n$}}  \vertiii{ \A_{i} \A_{j} \A_{k} }. \quad \quad \quad  \quad \quad \quad \quad \quad \quad
\nonumber
\end{align}
The desired inequality \eqref{equiv_to} will follow if the RHS above is greater or equal to the RHS of \eqref{equiv_to}.  Rearranging and canceling like terms, this is equivalent to asking whether
$$
(n-2)(n-1) \sum_{\mbox{\footnotesize $j  =1$}}^{\mbox{\footnotesize $n$}}  \vertiii{ \A_j^3 } \quad  \geq \quad \sum_{\mbox{\footnotesize $i, j,  k =1$}  \atop \mbox{\footnotesize all distinct} }^{\mbox{\footnotesize $n$}} \vertiii{ \A_{i} \A_{j} \A_{k}}.
$$
But indeed this inequality follows directly from averaging \eqref{eq:threemat} with $(\X_1,\X_2,\X_3) = (\A_{i}, \A_{j}, \A_{k})$ over all choices of pairwise distinct $i,j,k$.
This finishes the proof.
\end{proof}

\section*{Discussion}
There are certain difficulties that arise in the case $m=4$ which prevent us from extending the proof technique of Theorem \ref{main}, even in the setting of matrices.  ``Loopy" terms such as $\vertiii{  \A_1 \A_2 \A_3 \A_2 }$ start to appear at $m=4$, and it is not clear how to pair these  terms together to obtain a lower bound of distinct-term products, $\vertiii{  \A_1\A_2 \A_3 \A_4 }$.  Simple inductive arguments building on the cases $m=1,2,3$ are also difficult as the product of two positive-semidefinite operators  is not necessarily positive-semidefinite, and there are not many inequalities concerning products of four positive-semidefinite operators.  This remains a very compelling open problem.

\section*{Acknowledgments}

We are thankful to Afonso Bandeira, Dustin Mixon, Deanna Needell, Ben Recht, and Robert Schaback for helpful conversations on this topic.
Krahmer was supported by the German Science Foundation (DFG) in the context of the Emmy Noether Junior Research Group KR 4512/1-1 (RaSenQuaSI).  Ward was partially supported by an NSF CAREER award, DOD-Navy grant N00014-12-1-0743, and an AFOSR Young Investigator Program award.

\bibliography{AMGM}
\bibliographystyle{abbrv}

\section{Appendix: Proof of Theorem \ref{alt-ineq}}

Given $\A,\B \in M(d)$ with $\A,\B \succeq 0$, and given $r \geq 1$ and $s > 0$, our aim in this section is to verify the ALT-type inequality $\vertiii{(\B^r \A^r \B^r)^s} \leq \vertiii{(\B \A \B)^{rs}}$ for a general unitarily invariant norm $\vertiii{\cdot}$ on $M(d)$. The proof uses techniques from matrix analysis and we will make frequent use of results in \cite{bhatbook}. For the rest of this section, all matrices are assumed to be positive-semidefinite. 

\subsection{Antisymmetric tensorization}

In what follows, we write $\lambda_1(\X) \geq \lambda_2(\X) \geq  \lambda_3(\X) \geq \cdots ,$ to denote the sequence of eigenvalues of a matrix $\X$, written in non-increasing arrangement.

For $1 \leq k \leq d$, we denote the space $\cH_k = \wedge^k \R^d$ for the order-$k$ antisymmetric tensor power of $\R^d$, which consists of all formal real linear combinations of symbols of the form
\[
x_1 \wedge x_2 \wedge \cdots \wedge x_k,
\]
where $x_1,\cdots,x_k \in \R^d$. We regard  $\cH_k$  as a vector space in such a way that addition and scalar multiplication are multilinear in the variables $(x_1,\cdots,x_k)$. Namely,
\[
\left\{
\begin{aligned}
&x_1 \wedge \cdots \wedge x_l \wedge  \cdots \wedge x_k \;\; + \;\;  x_1 \wedge  \cdots \wedge x_l' \wedge \cdots \wedge x_k  \;\; = \;\; x_1 \wedge \cdots \wedge (x_l + x_l') \wedge \cdots \wedge x_k. \\
&c \cdot ( x_1 \wedge \cdots \wedge x_l \wedge \cdots \wedge x_k ) = x_1 \wedge \cdots \wedge (c x_l) \wedge \cdots \wedge x_k.
\end{aligned}
\right.
\]
We also require that the wedge product $\wedge$ is antisymmetric with respect to coordinate interchanges. Namely, for all $1 \leq i < j \leq k$ we have
\[
x_1 \wedge \cdots \wedge x_i \wedge \cdots \wedge x_j \wedge \cdots \wedge x_k = - x_1 \wedge \cdots \wedge x_j \wedge \cdots \wedge x_i \wedge \cdots \wedge x_k.
\]
From this definition one sees that the dimension of $\cH_k$ is ${d \choose k}$.

Given a sequence of matrices $\A_1,\cdots,\A_k \in M(d)$ -- which we regard as linear operators on $\R^d$ -- we form the antisymmetric tensor matrix $\A_1 \wedge \cdots \wedge \A_k$, which represents a linear operator on the tensor space $\cH_k$ defined by
\[
(\A_1  \wedge \cdots \wedge \A_k)(x_1  \wedge \cdots \wedge x_k) = (\A_1 x_1)  \wedge \cdots \wedge (\A_k x_k) \quad \mbox{for} \; x_1 \wedge \cdots \wedge x_k \in \cH_k.
\]
Given a matrix $\A \in M(d)$, we denote 
\[
\wedge^k \A := \overbrace{\A \wedge \A \wedge \cdots \wedge \A}^{k \; \mbox{times}}, \quad \mbox{a linear operator on } \cH_k.
\] 
We present a few basic properties of antisymmetric tensorization. Proofs of properties 1-5, along with a further discussion, are found in Section I.5 of \cite{bhatbook}.
\begin{enumerate}
\item $(\wedge^k \A ) ( \wedge^k \B) = \wedge^k (\A \B)$.
\item $(\wedge^k \A)^* = \wedge^k \A^*$.
\item $(\wedge^k \A)^{-1} = \wedge^k \A^{-1}$ if $\A$ is invertible.
\item If $\A$ is unitary or positive-semidefinite then so is $\wedge^k \A$.
\item If $\lambda_{i_1}, \lambda_{i_2}, \cdots, \lambda_{i_k}$ are eigenvalues of $\A$ associated to linearly independent eigenvectors $u_{i_1},u_{i_2},\cdots, u_{i_k}$, and if  $i_1 > i_2 > \cdots > i_k$, then $\lambda_{i_1} \cdot \lambda_{i_2} \cdots \lambda_{i_k}$ is an eigenvalue of $\wedge^k \A$ associated to the eigenvector $u_{i_1} \wedge u_{i_2} \wedge \cdots \wedge u_{i_k}$. In particular, if $\A$ is diagonalizable with eigenvalues $\lambda_j$, $1 \leq j \leq d$, then $\wedge^k \A$ is diagonalizable with eigenvalues $\lambda_{i_1} \cdots \lambda_{i_k}$, $1 \leq i_1 < \cdots < i_k \leq d$.
\item If $\A$ is positive-semidefinite then $(\wedge^k \A)^s = \wedge^k \A^s$ for all $s > 0$.
\end{enumerate}
\emph{Proof of property 6:}  Fix a complete set of eigenvalues $\lambda_1 \geq \cdots \geq \lambda_d \geq 0$ and eigenvectors $u_1,\cdots,u_d$ for $\A$. The eigenvalues of $\A^s$ are given by $\lambda_1^s,\cdots,\lambda_d^s$ and the eigenvectors are unchanged. According to property 5 we know that the eigenvalues/eigenvectors for $\wedge^k \A^s$ are given by $\lambda_{i_1}^s \cdots \lambda_{i_k}^s$ and $u_{i_1} \wedge \cdots \wedge u_{i_k}$, where $1 \leq i_1 < \cdots < i_k \leq d$. On the other hand, by property 5 we know the eigenvalues of $\wedge^k \A$ are $\lambda_{i_1} \cdots \lambda_{i_k}$ associated to the eigenvectors $u_{i_1} \wedge \cdots \wedge u_{i_k}$; hence the eigenvalues of $(\wedge^k \A)^s$ are $\lambda_{i_1}^s \cdots \lambda_{i_k}^s$ associated to the same eigenvectors. Therefore, it holds that  $(\wedge^k \A)^s = \wedge^k \A^s$.

\begin{lemma}\label{alt3}
For all $r \geq 1$ and $s > 0$ one has
\[
\prod_{i=1}^k \lambda_i \left( (\B^r \A^r \B^r)^s \right) \leq \prod_{i=1}^k \lambda_i \left( \B \A \B)^{rs} \right)
\qquad \mbox{for all} \; k=1,2,\cdots,d.\]
\end{lemma}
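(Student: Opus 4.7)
The plan is to deduce this Weyl-type multiplicative majorization from the operator-norm version of the ALT inequality (Lemma \ref{alt2}) by applying it on the antisymmetric tensor space $\cH_k$, using the six properties of $\wedge^k$ listed above. The key observation is that for any positive-semidefinite $\X \in M(d)$, property 5 identifies the operator norm of $\wedge^k \X$ with $\prod_{i=1}^k \lambda_i(\X)$: the eigenvalues of $\wedge^k \X$ are exactly the products $\lambda_{i_1}(\X) \cdots \lambda_{i_k}(\X)$ over increasing indices, and the maximum such product is obtained by taking the top $k$ eigenvalues.

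First I would verify that $\wedge^k \A$ and $\wedge^k \B$ are positive-semidefinite by property 4, so that Lemma \ref{alt2} applies to them. Substituting these matrices into Lemma \ref{alt2} gives
\[
\bigl\| \bigl((\wedge^k \B)^r (\wedge^k \A)^r (\wedge^k \B)^r\bigr)^s \bigr\| \leq \bigl\| \bigl((\wedge^k \B)(\wedge^k \A)(\wedge^k \B)\bigr)^{rs} \bigr\|.
\]
Next I would simplify each side. On the left, property 6 (applied to the positive-semidefinite matrices $\A$ and $\B$) lets me replace $(\wedge^k \B)^r$ and $(\wedge^k \A)^r$ by $\wedge^k \B^r$ and $\wedge^k \A^r$; then property 1 collapses the product into $\wedge^k(\B^r \A^r \B^r)$. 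Since $\B^r \A^r \B^r = (\A^{r/2}\B^r)^* (\A^{r/2}\B^r)$ is positive-semidefinite, another application of property 6 gives $(\wedge^k(\B^r \A^r \B^r))^s = \wedge^k\bigl((\B^r \A^r \B^r)^s\bigr)$. The right-hand side is simplified analogously using properties 1 and 6 to $\wedge^k\bigl((\B\A\B)^{rs}\bigr)$.

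Combining these reductions with the formula $\|\wedge^k \X\| = \prod_{i=1}^k \lambda_i(\X)$ valid for positive-semidefinite $\X$, the displayed operator-norm inequality becomes exactly
\[
\prod_{i=1}^k \lambda_i\bigl((\B^r \A^r \B^r)^s\bigr) \leq \prod_{i=1}^k \lambda_i\bigl((\B\A\B)^{rs}\bigr),
\]
for each $k = 1, 2, \ldots, d$, which is the claim. The potential obstacle is ensuring one may legitimately pass powers through $\wedge^k$; but this is exactly the content of property 6, and it applies because both $\A,\B$ and the composite matrix $\B^r \A^r \B^r$ are positive-semidefinite. The argument thus reduces entirely to a clean bookkeeping exercise with the tensor-power identities, with Lemma \ref{alt2} doing all the analytic work.
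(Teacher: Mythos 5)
Your argument is correct and is essentially identical to the paper's own proof: both apply Lemma \ref{alt2} to $\wedge^k\A$ and $\wedge^k\B$, collapse the resulting expressions via properties 1 and 6, and then read off the product of top-$k$ eigenvalues as the operator norm of the $k$-th antisymmetric power via property 5. Your only additions are explicit checks (positive-semidefiniteness of $\wedge^k\A$, $\wedge^k\B$, and $\B^r\A^r\B^r$) that the paper leaves implicit.
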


\begin{proof}
We apply the norm inequality in Lemma \ref{alt2} to the tensor matrices $\wedge^k \A$ and $\wedge^k \B$. Thus, $\| \left( (\wedge^k \B)^r (\wedge^k \A)^r (\wedge^k \B)^r \right)^s \| \leq \| \left( (\wedge^k \B) \cdot (\wedge^k \A) \cdot (\wedge^k \B) \right)^{rs} \|$. Applying subsequently properties 1 and 6, we obtain $\| \wedge^k (\B^r \A^r \B^r)^s \| \leq \| \wedge^k (\B \A \B)^{rs} \|$. Recall that the eigenvalues of $(\B^r \A^r \B^r)^s$ are ordered in non-increasing arrangement: $\lambda_1((\B^r \A^r \B^r)^s) \geq \lambda_2((\B^r\A^r\B^r)^s) \geq \cdots \geq \lambda_d((\B^r\A^r\B^r)^s)$. Therefore, due to property 5, the operator norm of $\wedge^k  (\B^r \A^r \B^r)^s$, which is the largest eigenvalue of $\wedge^k  (\B^r \A^r \B^r)^s$ is
\[
\lambda_1( (\B^r \A^r \B^r)^s) \cdots \lambda_k ( ( \B^r \A^r \B^r)^s).
\]
By similar reasoning, the operator norm of $\wedge^k  (\B \A \B)^{rs}$ is 
\[
\lambda_1( (\B \A \B)^{rs}) \cdots \lambda_k ( ( \B \A \B)^{rs}).
\]
This establishes the desired inequality.
\end{proof}

\subsection{Weak Majorization}
Recall that $\alpha = (\alpha_1,\alpha_2,\cdots,\alpha_d) \in \R^d$ is \emph{weakly majorized} by $\beta = (\beta_1,\beta_2,\cdots,\beta_d) \in \R^d$ -- written in shorthand as $\alpha \wprec \beta$ -- if for all $k =1,2,\cdots,d$ it holds that
\[
\sum_{i=1}^k \alpha_i \leq \sum_{i=1}^k \beta_i.
\]
We borrow the following lemma from \cite{bhatbook}; see Exercise II.3.5.
\begin{lemma}\label{weak_major}
Assume that $x = (x_i)_{i=1}^d$ and $y = (y_i)_{i=1}^d$ satisfy $x_1 \geq x_2 \geq \cdots \geq x_d \geq 0$ and $y_1 \geq y_2 \geq \cdots \geq y_d \geq 0$. If additionally
\[
\prod_{i=1}^k x_i \leq \prod_{i=1}^k y_i \qquad \mbox{for all} \; k=1,\cdots,d,
\]
then $x \wprec y$.
\end{lemma}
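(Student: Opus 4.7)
The plan is to reduce the lemma to the classical Hardy--Littlewood--P\'olya principle that weak majorization is preserved under coordinate-wise application of a nondecreasing convex function, and then to apply that principle to $\phi(t) = e^t$.

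First I would dispose of possible zero tails. Since $x$ and $y$ are nonincreasing and nonnegative, there exist last-nonzero indices $m_x, m_y \in \{0, 1, \dots, d\}$ (with $m_x = 0$ meaning $x_1 = 0$, in which case the conclusion is trivial). If $m_x > m_y$, then $y_{m_y + 1} = 0$ but $x_{m_y + 1} > 0$, forcing $\prod_{i=1}^{m_y + 1} x_i > 0 = \prod_{i=1}^{m_y + 1} y_i$ and contradicting the hypothesis; hence $m_x \leq m_y$. For $k > m_x$ the partial sum $\sum_{i=1}^k x_i$ stabilizes at $\sum_{i=1}^{m_x} x_i$, so the inequality at such a $k$ follows from the inequality at $k = m_x$ together with the nonnegativity of $y$. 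This reduces the problem to proving the partial-sum inequalities for $k \leq m_x$, where every entry of $x$ and $y$ appearing is strictly positive.

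In this all-positive regime, I would set $a_i = \log x_i$ and $b_i = \log y_i$. Since $\log$ is increasing, $a$ and $b$ remain nonincreasing, and taking logarithms converts the hypothesis into
\[
\sum_{i=1}^k a_i \leq \sum_{i=1}^k b_i, \qquad k = 1, \dots, d,
\]
that is, the weak majorization $a \wprec b$. Fixing any $k$, the truncations $(a_1, \dots, a_k)$ and $(b_1, \dots, b_k)$ are nonincreasing vectors in $\R^k$ satisfying the same partial-sum inequalities, so $(a_1, \dots, a_k) \wprec (b_1, \dots, b_k)$. I would then invoke the standard theorem (see e.g.\ \cite{bhatbook}) that if $\alpha \wprec \beta$ in $\R^n$ and $\phi : \R \to \R$ is nondecreasing and convex, then $\sum_{i=1}^n \phi(\alpha_i) \leq \sum_{i=1}^n \phi(\beta_i)$. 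Applied with $\phi = \exp$, this yields $\sum_{i=1}^k x_i = \sum_{i=1}^k e^{a_i} \leq \sum_{i=1}^k e^{b_i} = \sum_{i=1}^k y_i$, as required.

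The main obstacle is locating and correctly citing the monotone-convex preservation theorem for weak majorization; once that ingredient is in hand, the zero-entry bookkeeping is routine and the logarithmic change of variables mechanically converts weak log-majorization into ordinary weak majorization. No new matrix-analytic input is required.
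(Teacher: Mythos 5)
Your proof is correct. The paper itself does not supply a proof of this lemma but cites it as Exercise II.3.5 in Bhatia's textbook \cite{bhatbook}; your argument---reducing to the strictly positive case via the zero-tail bookkeeping, taking logarithms to convert log-majorization into ordinary weak majorization of nonincreasing sequences, and then applying the Hardy--Littlewood--P\'olya monotone-convex preservation theorem with $\phi = \exp$ to each truncation---is exactly the standard solution to that exercise, and you handle the degenerate cases (zero entries, $m_x$ versus $m_y$) correctly.
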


From Lemma \ref{alt3} and Lemma \ref{weak_major} we obtain a weak-majorization inequality between the eigenvalues of the matrices of interest. Namely, for any $r \geq 1$ and $s > 0$ we have
\begin{equation}
\label{weak_major1}
(\lambda_j((\B^r\A^r\B^r)^s))_{j=1}^d \wprec (\lambda_j(\B\A\B)^{rs})_{j=1}^d.
\end{equation}

By the Fan Dominance Theorem (see Theorem IV.2.2 in \cite{bhatbook}), the weak majorization in \eqref{weak_major1} implies the desired estimate, Theorem \ref{alt-ineq}.

\end{document}